\newcommand{\po}{\left(}
\newcommand{\pf}{\right)}
\newcommand{\co}{\left[}
\newcommand{\cf}{\right]}
\newcommand{\cco}{\llbracket}
\newcommand{\ccf}{\rrbracket}
\newcommand{\E}{\mathbb E}
\newcommand{\R}{\mathbb R} 
\newcommand{\T}{\mathbb T} 
\newcommand{\C}{\mathcal C}
\newcommand{\N}{\mathbb N}
\newcommand{\dd}{\text{d}}
\newcommand{\na}{\nabla}
\newtheorem{thm}{Theorem}
\newtheorem{lem}[thm]{Lemma}
\newtheorem{prop}[thm]{Proposition}
\newtheorem{cor}[thm]{Corollary}
\newtheorem{rem}{Remark}
\title{Switched diffusion processes for non-convex optimization and saddle points search}
\author{Lucas Journel, Pierre Monmarché}
\date{September 2022}
\begin{document}

\maketitle

\begin{abstract}
    We introduce and investigate  stochastic processes designed to find local minimizers and saddle points of non-convex functions, exploring the landscape more efficiently than the standard noisy gradient descent. The processes switch between two behaviours, a noisy gradient descent and a noisy saddle point search. It is proven to be well-defined and to converge to a stationary distribution in the long time. Numerical experiments are provided on low-dimensional toy models and for  Lennard-Jones clusters.
\end{abstract}

\textbf{Keywords}: Stochastic optimisation, saddle point search, metastability, Markov-modulated process, switched diffusion process

\section{Introduction}

\subsection{Overview}

 This work addresses the issue of finding the local minimizers and the saddle points 
of a given  non-convex potential $U:\R^d\to\R_+$. These questions are ubiquituous in a wide range of scientific fields. In particular, in molecular dynamics, $U$ being the energy of a molecular system, the minimizers are the metastable state of the system, while the saddle points are the transition states between the former. As we will see, we will design a stochastic algorithm which targets both local minimizers and saddle points, in such a way that each of this task benefits from the other. In particular, the algorithm is suitable even as an optimisation tool, when the goal is only to find the global minimizers of $U$.

In high dimension, this problem is challenging, as an exhaustive search is impossible and running local search algorithms started from random initial conditions only works when it is possible to sample starting points that have a reasonable probability to hit the basin of attraction of the points of interest (for a gradient descent or a saddle point search), which is often not the case when these basins  are concentrated around an unknown manifold of dimension much smaller than $d$.

A classical solution in order to visit the whole space while taking into account the energy landscape (hence focusing on low-energy areas) is to follow the overdamped Langevin diffusion process, which is the Markov process solving
\begin{equation}\label{eq:Langevin}
    \dd X_t = -\na U(X_t)\dd t + \sqrt{2\varepsilon}\dd B_t\,,
\end{equation}
where $B$ is a $d$-dimensional Brownian motion and $\varepsilon>0$ is a temperature parameter. When $\varepsilon$ is small, this is a noisy perturbation of a deterministic gradient descent (contrary to the stochastic gradient descent algorithm, here the noise is voluntarily added, it has a constant intensity and does not come from a Monte Carlo approximation of $\na U$). Under standard growth conditions on $U$ at infinity, the process is ergodic with respect to the measure
\[
\mu_\varepsilon(\dd x) = \frac{e^{-U(x)/\varepsilon}\dd x}{\int_{\R^d} e^{-U(y)/\varepsilon}\dd y}\,,
\]
i.e. the time spent in any given domain is asymptotically proportional to its probability with respect to $\mu_\varepsilon$. Hence, the temperature $\varepsilon$ has to be taken small enough to counteract the entropic effect due to the high dimension $d$ and ensure that the vicinity of  minimizers have a high probability. Then, one can use the point of the trajectory with the lowest energy as a starting point for a local gradient descent (or a variant, like the stochastic gradient descent). However, as $\varepsilon$ vanishes, it is well known that 
\eqref{eq:Langevin} is metastable, since transitions between different potential wells occur in a time of order roughly $e^{c/\varepsilon}$ for some $c>0$, due to the energy barriers induced by low-probability regions interspersing high-probability ones. This makes the convergence of the law of the process towards its equilibrium, and thus the exploration, very slow, which makes this method fail in practice.

Besides, according to large deviations theory, more precisely to the Freidlin-Wentzell results on the low noise behaviour of diffusion processes, it is known that, as $\varepsilon$ vanishes, the process \eqref{eq:Langevin} is likely to exit from a potential well through the saddle point with the lowest energy level on the boundary of the well. Instead of waiting for an unlikely deviation of the Brownian motion to lead the process \eqref{eq:Langevin} to this saddle point, this naturally motivates the definition of a process which actively looks for it. More precisely, the main object of this work is a process which switches randomly between two dynamics, the noisy gradient descent \eqref{eq:Langevin} and a noisy saddle points search.

The rest of this work is organized as follows. In the remainder of the introduction, we present two saddle point search algorithm in Section~\ref{s-sec:intro-dertemini}, and we define the stochastic processes we are going to investigate. In Section~\ref{sec:Theory}, we conduct a theoretical study of the processes, and in Section~\ref{sec:Num} a numerical study.

\subsection{Deterministic saddle point algorithms}\label{s-sec:intro-dertemini}

Since saddle point search is less standard than optimization, let us first present some existing deterministic algorithms on this topic. A first class of methods, based on reaction paths, requires the knowledge of two local minimizers, and then finds a path of minimal elevation between them, which passes through a saddle point see e.g. \cite{henkelman2002methods,E_2002}. This doesn't correspond to our context, where the local minimizers are unknown. A second class of methods, considered e.g. in  \cite{Non-conservative-GAD,high-index-GAD,minimisation-formulation,ISD-GAD,Weiman2012,Bofill2015,Gu2018}  and references within and  which will be the one of interest for us, relies on local walkers, analogous of the gradient descent for optimization, i.e. solutions of some ODEs for which saddle points of $U$ are stable equilibrium.   Given some $x\in\R^d$, write:
\[
\lambda_1(x) \leqslant \lambda_2(x) \leqslant \cdots \leqslant \lambda_d(x)
\]
for the ordered eigenvalues of $\na^2U(x)$, and suppose that we are given $v_1(x),\dots,v_d(x)\in\R^d$ such that $v_i(x)$ is an eigenvector of $\na^2 U(x)$ associated to $\lambda_i(x)$ (in other words, when the eigenvalues are not all simple, we assume that we have an arbitrary rule to select a given eigenbasis, for instance we take the basis obtained as the limit of the Jacobi algorithm). The idealised saddle dynamic (ISD) is the solution of 
\[\dot x_t = - \po I - 2 v_1(x_t) v_1(x_t)^T\pf \na U(x_t)\,.\]
In other words, $x_t$ follows a gradient descent, except in the direction $v_1(x_t)$ where a reflection is performed, i.e. the process follows a gradient ascent in this direction. Notice that in general $x\mapsto v_1(x)$ is not continuous so the existence of this process is unclear and may be restricted to some parts of the space or to a finite time interval.

A variation of the ISD is the so-called Gentlest Ascent Dynamic (GAD) $(x_t,v_t)\in\R^d \times \mathbb S^{d-1}$ solving
\begin{equation*}
    \left\{ \begin{array}{lll}
        \dot x_t  & = & - \po I - 2 v_t v_t^T\pf \na U(x_t)  \\
        \eta \dot v_t & = & -(I-v_tv_t^T)\na^2U(x_t)v_t\,,
    \end{array}  \right.
\end{equation*}
where $\eta>0$ is a fixed parameter. If $\na^2 U(x_t)$ were fixed in the second equation, this would be a gradient descent on $\mathbb S^{d-1}$ for the Rayleigh quotient $v\mapsto -v^T \na^2 U(x_t) v$. As $\eta$ vanishes, formally, we recover the ISD. Contrary to the later, the GAD is always well-defined for all initial conditions and all times.

The following results on the ISD and GAD are from \cite[Theorems 2 and  3]{ISD-GAD} and \cite{Non-conservative-GAD}.

\begin{prop}\label{prop:deterministicISD-GAD}
\begin{itemize}
    \item Any critical point of $U$ is an equilibrium points for the ISD. It is stable if and only if it is a index-1 saddle point.
    \item Any $(x,v)\in\R^d\times\mathbb S^{d-1}$ with $x$ a critical point of $U$ and $v$   an eigenvalue of $\na^2U$ is an equilibrium point of the Gentlest Ascent dynamic. It is stable if and only if $x$ is a index-1 saddle point and $v$ is associated to the negative eigenvalue.
    \item If $\Omega\subset\R^d$ satisfies that for all $x\in \Omega$, we have $\lambda_1(x)<0<\lambda_2(x)$, then there exists a unique saddle point $z_*$ in $\Omega$, and for all $x_0\in\Omega$, there exists a unique solution to the $ISD$ which converges exponentially fast towards $z_*$.
    \item Under the same assumption, there is $\delta >0$ such that if $(x_0,v_0)\in\Omega\times \mathbb S^{d-1}$, and $|v_0-v_1(x_0)|<\delta$, then there exists a unique solution to the $GAD$ which converges exponentially fast towards $(z_*,v_1(z_*))$.
\end{itemize}
\end{prop}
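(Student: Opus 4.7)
Parts 1 and 2 are essentially Jacobian calculations at an equilibrium. If $x_*$ is a critical point of $U$, then $\na U(x_*)=0$ immediately gives $\dot x=0$ for the ISD; for the GAD, the equation $\eta\dot v=-(I-vv^T)\na^2U(x)v$ vanishes at $(x_*,v)$ iff $\na^2 U(x_*)v\parallel v$, i.e.\ $v$ is an eigenvector of $\na^2 U(x_*)$. To linearize the ISD at $x_*$, I would compute the operator $A:=-\po I-2v_1(x_*)v_1(x_*)^T\pf\na^2 U(x_*)$, decompose in the eigenbasis $(v_i(x_*))$, and observe that $Av_1=\lambda_1(x_*)v_1$ and $Av_j=-\lambda_j(x_*)v_j$ for $j\geqslant 2$. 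Asymptotic stability (all eigenvalues of $A$ negative) is thus equivalent to $\lambda_1(x_*)<0$ and $\lambda_j(x_*)>0$ for $j\geqslant 2$, i.e.\ to $x_*$ being an index-1 saddle. For the GAD at $(x_*,v_*=v_i(x_*))$, since $\na U(x_*)=0$ the partial derivative $\partial_v(-(I-2vv^T)\na U(x))$ vanishes, so the Jacobian is block-triangular: the $x$-block is the ISD-type matrix (with $v_1$ replaced by $v_i$), and on the tangent space $v_*^\perp$ the $v$-block reduces to $\eta^{-1}(\lambda_i(x_*)I-\na^2 U(x_*))$, with eigenvalues $\eta^{-1}(\lambda_i-\lambda_j)$, $j\neq i$. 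Stability of both blocks simultaneously forces $i=1$ and $x_*$ to be an index-1 saddle, giving Part 2.

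For Parts 3 and 4, the key structural observation is that the pointwise gap $\lambda_1(x)<\lambda_2(x)$ on $\Omega$ makes $\lambda_1$ a simple eigenvalue, so by analytic perturbation theory the rank-one projector $v_1(x)v_1(x)^T$ is smooth on $\Omega$ (no sign choice is needed since only this projector appears in the ISD). The ISD is then a genuine smooth ODE on $\Omega$ and Picard-Lindel\"of gives local well-posedness. A direct computation shows that $V(x):=\tfrac12|\na U(x)|^2$ is a strict Lyapunov function: expanding $\na U=\sum_i c_iv_i$ in the eigenbasis at $x_t$, one finds
\[
\frac{d}{dt}V(x_t)=-\na U^T\na^2U\,(I-2v_1v_1^T)\na U=c_1^2\lambda_1-\sum_{j\geqslant 2}c_j^2\lambda_j\leqslant 0,
\]
with equality only when $\na U(x_t)=0$. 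Assuming trajectories remain in $\Omega$ (the invariance of $\Omega$ being the implicit standing hypothesis of \cite{ISD-GAD}), $\omega$-limit points are critical points of $U$ in $\Omega$, which by the spectral gap assumption are automatically index-1 saddles, hence locally exponentially attracting by Part 1. Existence of $z_*$ then follows; uniqueness comes from the fact that the stable basin of any such saddle is open, while the strictly decreasing Lyapunov $V$ and connectedness of $\Omega$ preclude two disjoint basins coexisting. Exponential convergence is obtained by combining the global Lyapunov decrease (which brings trajectories into a neighbourhood of $z_*$ in finite time) with the local linearization, which is a hyperbolic sink.

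For Part 4 I would invoke a slow-manifold / singular-perturbation argument. With $x$ frozen, the $v$-equation is a gradient flow on $\mathbb S^{d-1}$ for the Rayleigh quotient $v\mapsto -v^T\na^2U(x)v$, whose local attractor is $v_1(x)$ with linear rate $\eta^{-1}(\lambda_2(x)-\lambda_1(x))>0$ on $\Omega$. Hence for $\delta$ small enough, $v_0$ lies in the basin of $v_1(x_0)$ and $v_t$ tracks $v_1(x_t)$ up to an $O(\eta)$ error; the $x$-equation then becomes a regular perturbation of the ISD and inherits its exponential convergence to $z_*$, while $v_t\to v_1(z_*)$. The main technical obstacle is not in Parts 1--2 (pure linear algebra) nor in the Lyapunov computation of Part 3, but in the global part of Part 3: proving invariance of $\Omega$ and uniqueness of $z_*$ genuinely requires some structural assumption beyond the pointwise spectral gap, and this is really the point where I would lean on the precise hypotheses and arguments of \cite{ISD-GAD,Non-conservative-GAD}.
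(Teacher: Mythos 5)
The paper itself does not prove this proposition: it is imported verbatim from \cite[Theorems 2 and 3]{ISD-GAD} and \cite{Non-conservative-GAD}, so there is no in-paper argument to compare your attempt against. That said, your reconstruction of Parts 1--2 is correct and is the natural route: at a critical point $\na U(x_*)=0$ kills the term coming from the $x$-derivative of the reflector $I-2v_1(x)v_1(x)^T$, so the ISD Jacobian is exactly $A=-(I-2v_1v_1^T)\na^2U(x_*)$, with spectrum $\{\lambda_1,-\lambda_2,\dots,-\lambda_d\}$; the GAD Jacobian is block-triangular for the same reason, and the $v$-block has spectrum $\eta^{-1}(\lambda_i-\lambda_j)$, $j\neq i$, on $v_*^\perp$. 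Your Lyapunov computation $\tfrac{d}{dt}\tfrac12|\na U(x_t)|^2 = c_1^2\lambda_1 - \sum_{j\geqslant 2}c_j^2\lambda_j \leqslant 0$ is also right.

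Parts 3--4 of your sketch have the gaps you flag yourself, and they are real. For Part 3, ``strictly decreasing $V$ plus connectedness precludes two disjoint basins'' is not an argument: $\Omega$ is not stated to be connected, trajectories may exit $\Omega$, and a strict Lyapunov function says nothing once the trajectory leaves the region where the spectral-gap hypothesis holds. Invariance of $\Omega$ and existence/uniqueness of $z_*$ are precisely the substantive content of the cited theorems and cannot be bootstrapped from $V$ alone. For Part 4, a slow-manifold / singular-perturbation argument is a statement about $\eta\to0$, whereas the proposition asserts the existence of some $\delta>0$ (possibly $\eta$-dependent) for a \emph{fixed} $\eta>0$; so ``$v_t$ tracks $v_1(x_t)$ up to an $O(\eta)$ error'' is not the right mechanism. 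What is actually needed is that the uniform gap $\lambda_2(x)-\lambda_1(x)>0$ on $\Omega$ makes $v_1(x_t)$ a uniformly hyperbolic attractor for the $v$-flow at the given $\eta$, so that a $\delta$-cone around $v_1(\cdot)$ is forward-invariant; that is a quantitative hyperbolicity estimate, not a singular-limit one.
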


The convergence results are only local, and one cannot expect more in general since the ISD may have attracting singularities \cite{ISD-GAD}. In the case of the GAD, with a potential of the form $\sum_{i=1}^dU_i(x_i)$, and an initial condition $v_0\in \text{vect}(e_1,...e_i)$, $v_t$ will stay orthogonal to $e_j$, $j>i$, and hence the process will never get close to an equilibrium point with an eigenvector not in $\text{vect}(e_1,...e_i)$. As in the optimization case where the overdamped Langevin diffusion \eqref{eq:Langevin} allows for a global exploration while the gradient descent only converges locally, it is thus natural to add a small Brownian noise to the deterministic ISD and GAD. The resulting processes will explore eventually the whole space but,  thanks to Proposition~\ref{prop:deterministicISD-GAD}, will still be attracted locally by saddle points (notice that the Freidlin-Wentzell theory on random perturbation of ODE does not apply directly in the ISD case because of the discontinuity of $x\mapsto v_1(x)$). 

\subsection{The stochastic processes}\label{s-sec:def-process}

For simplicity, from now on, we will restrict ourselves to the case of a function defined on compact manifold (specifically the periodic torus). Indeed, in practical situation, local minima and saddle points are located in a compact set. Most of the definitions and arguments below would be easily adapted to $\R^d$ provided suitable modifications outside a given compact, in particular to ensure the stability of the processes.

Write $\T^d$ for the d-dimensional torus and fix some function $U\in\mathcal C^2(\T^d,\R_+)$. As in the previous section, we write $\lambda_i(x)$ the ordered eigenvalues of $\na^2 U(x)$, and we consider functions $x\mapsto v_i(x)$ such that $v_i(x)$ is an eigenvector of $\na^2 U(x)$ associated to $\lambda_i(x)$ for all $x\in\T^d$ and $i\in\cco 1,d\ccf$. Moreover, we assume that the functions $v_i$ are measurable for all $i\in\cco 1,d\ccf$, which is indeed possible:

\begin{lem}
For all $i\in\cco 1,d\ccf$ there exists a measurable function $v_i :\T^d\rightarrow \mathbb S^{d-1}$ such that $v_i(x)$ is and eigenvector of $\na^2 U(x)$ associated to $\lambda_i(x)$ for all $x\in\T^d$.
\end{lem}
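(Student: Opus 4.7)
The plan is to apply a measurable selection theorem to an appropriate multifunction whose values are the unit eigenvectors associated to $\lambda_i(x)$. I will proceed in three steps: establish regularity of the data, set up the multifunction, and invoke Kuratowski--Ryll-Nardzewski.

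First, I would record the regularity of the eigenvalues. Since $U\in\C^2(\T^d,\R_+)$, the map $x\mapsto \na^2 U(x)$ is continuous with values in the space of symmetric $d\times d$ matrices, and by the Courant--Fischer min-max characterization the ordered eigenvalues $x\mapsto \lambda_i(x)$ are continuous on $\T^d$; in particular they are Borel measurable. Next, for each fixed $i\in\cco 1,d\ccf$, I would define the set-valued map
\[
F_i:\T^d\rightrightarrows \mathbb S^{d-1},\qquad F_i(x) \;=\; \{v\in\mathbb S^{d-1}\,:\,(\na^2 U(x)-\lambda_i(x)I)v=0\}.
\]
Then $F_i(x)$ is non-empty (because $\lambda_i(x)$ is an eigenvalue of $\na^2U(x)$) and compact (as a closed subset of $\mathbb S^{d-1}$), and the graph $\{(x,v)\in\T^d\times\mathbb S^{d-1}:v\in F_i(x)\}$ is closed in the compact metric space $\T^d\times\mathbb S^{d-1}$, by joint continuity of $(x,v)\mapsto(\na^2U(x)-\lambda_i(x)I)v$.

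The third step is to verify weak measurability of $F_i$ and then apply the Kuratowski--Ryll-Nardzewski theorem. A short upper hemicontinuity argument (take $x_n\to x$ with $v_n\in F_i(x_n)\cap K$ for $K\subset\mathbb S^{d-1}$ closed, extract a convergent subsequence using compactness, conclude by closedness of the graph) shows that $F_i^{-1}(K):=\{x:F_i(x)\cap K\neq\emptyset\}$ is closed whenever $K$ is closed. Since every open $V\subset\mathbb S^{d-1}$ is $F_\sigma$, say $V=\bigcup_n K_n$ with $K_n$ closed, and since $F_i^{-1}$ commutes with unions, $F_i^{-1}(V)=\bigcup_n F_i^{-1}(K_n)$ is Borel. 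The Kuratowski--Ryll-Nardzewski selection theorem then delivers a Borel measurable selection $v_i:\T^d\to\mathbb S^{d-1}$ with $v_i(x)\in F_i(x)$, which is exactly the claim.

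The only real subtlety is the passage from a closed-graph, compact-valued multifunction to weak measurability, because eigenspaces may have varying dimension where the eigenvalues collide or cross and one cannot expect continuity of $x\mapsto v_i(x)$. The upper hemicontinuity route above sidesteps this cleanly. As an alternative more constructive approach, one could work with the spectral projector $P_i(x)$ onto $\ker(\na^2U(x)-\lambda_i(x)I)$ (Borel measurable via Cauchy's integral formula once one has localized $\lambda_i(x)$, or via a Lagrange interpolation polynomial in $\na^2 U(x)$ evaluated at the distinct $\lambda_j(x)$'s) and set $v_i(x)=P_i(x)e_{k(x)}/|P_i(x)e_{k(x)}|$ with $k(x)=\min\{j:P_i(x)e_j\neq 0\}$; this is concrete but requires extra care precisely at the crossing loci, which is why I favor the selection-theorem packaging.
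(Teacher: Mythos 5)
Your argument is correct, but it takes a genuinely different route from the paper's. The paper gives a \emph{constructive} selection: it runs the gradient flow of the Rayleigh quotient on the sphere started from each canonical basis vector $e_j$, observes that the limit map $H\mapsto\lim_{t\to\infty}\Phi_t^H(e_j)$ is Borel (a pointwise limit of continuous maps), picks the smallest $j$ whose limit attains the smallest eigenvalue, deflates $H$ by a rank-one modification, and iterates; measurability of $v_i$ then follows by composition with $x\mapsto\nabla^2U(x)$. Your proof is \emph{abstract}: you package the eigenspace as a compact-valued multifunction $F_i$ with closed graph, verify weak measurability via upper hemicontinuity and the $F_\sigma$ decomposition of open sets, and invoke Kuratowski--Ryll-Nardzewski. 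Both are sound. The trade-off is familiar: the paper's construction is explicit and self-contained (useful if one wants to actually compute or simulate the selection, as the rest of the paper does), but it tacitly relies on convergence of the Rayleigh gradient flow to a single critical point (a Łojasiewicz-type fact) and on the observation that some $e_j$ has non-zero projection onto each successive leading eigenspace. Your version sidesteps all dynamical questions at the cost of citing a selection theorem, and it is more robust -- it would work verbatim for any continuous symmetric-matrix-valued field, and indeed for any closed-graph compact-valued multifunction between a Polish domain and a Polish codomain. Either proof would serve the paper's purpose, which only needs measurability, not any particular choice of $v_i$.
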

\begin{proof}
Given a symmetric matrix $H$ and $(e_1,...,e_d)$ the canonical basis of $\R^d$, the function $H\mapsto (u_1,\mu_1,...,u_d,\mu_d)$, where $u_i$ is the limit of the gradient descent for the Rayleigh quotient starting from $e_i$ and $\mu_i$ is the associated eigenvalue, is measurable. At least one of the $\mu_i$ is the smallest eigenvalue of $H$, hence we may just select the smallest indices $i_0$ such that it is the case and consider the associated vector. We can then iterate by applying the same procedure to $(I-u_{i_0}u_{i_0}^T)H(I-u_{i_0}u_{i_0}^T) + (|H|+1) u_{i_0} u_{i_0}^T$, and similarly by induction. We conclude with the fact that $x\mapsto\na^2U(x)$ is continuous, hence measurable.
\end{proof}

As in $\R^d$, for $\varepsilon>0$ the overdamped Langevin process is defined on $\T^d$ as the solution of \eqref{eq:Langevin}. It is ergodic with respect to $\mu_\varepsilon$ the probability measure with density proportional to $e^{-U/\varepsilon}$ on the torus.  Similarly, we define the noisy ISD  as the solution on $\T^d$ of
\begin{equation}\label{eq:noisy-ISD}
    \dd X_t = -\left(I-2v_1(X_t)v_1(X_t)^T\right)\na U(X_t)\dd t + \sqrt{2\varepsilon}\dd B_t\,,
\end{equation}
which, contrary to its deterministic counterpart, is always well-defined (see Proposition~\ref{prop:ISD-theory} below). Concerning the noisy version of GAD, we may wonder whether it is necessary to add noise to both coordinates, and thus in general we can consider the noisy GAD as the solution on $\T^d\times \mathbb S^{d-1}$ of
\begin{equation}\label{eq:noisy-GAD}
    \left\{ \begin{array}{lll}
    \dd X_t &=& -\left(I-2V_tV_t^T\right)\na U(X_t)\dd t + \sqrt{2\varepsilon}\dd B_t, \\
    \dd V_t &=&  (I-V_tV_t^T)\co -\frac{1}\eta \na^2U(X_t)V_t\dd t + \sqrt{2\varepsilon'} \dd B_t' \cf - d \varepsilon' \dd t, \end{array}\right.
\end{equation}
where $\eta>0$, $\varepsilon'\geqslant 0$ and $B,B'$ are independent $d$-dimensional Brownian motions. The second line is such that if $\na^2 U(X_t)$ is constant then this is an overdamped Langevin diffusion on the sphere with potential given by the Rayleigh quotient (see e.g. \cite[Section 3.2.3]{LelievreFreeEnergy} or \cite{Alfonsi}). In particular, according to \cite[Lemma 3.18]{LelievreFreeEnergy}, $V_t\in\mathbb S^{d-1}$ for all $t\geqslant 0$. In fact, by contrast with the deterministic case, since the noisy ISD is well defined, we will not insist much on the noisy GAD, and simply discuss the case $\varepsilon'=0$ in Section~\ref{s-sec:ISD-theory}.

Next, we define the idealised switched process (ISP) as the Markov process $(X_t,I_t)_{t\geqslant 0}$ on $\T^d\times\{0,1\}$ where $(I_t)_{t\geqslant 0}$ is a Poisson process on $\{0,1\}$ with jump rate $\nu>0$ and $X$ solves
\begin{equation}\label{def:diffswitch}
    \dd X_t \ = \   H_{I_t}(X_t)\dd t + \sqrt{2\varepsilon}\dd B_t,
\end{equation}
with  $(B_t)_{t\geqslant 0}$ a Brownian motion independent from  $(I_t)_{t\geqslant 0}$ and, for all $x\in\mathbb T^d$,
\[H_0(x)= -\na U(x)\,,\qquad H_1(x)=-(I-2v_1(x)v_1(x)^T)\na U(x)\,.\]
In other words, the process alternates between the overdamped Langevin dynamics and the noisy ISD, the time being two switching events being independent and distributed according to an exponential law of parameter $\nu$. Likewise, we can define the gentle switched process (GSP) as the Markov process $(X_t,V_t,I_t)_{t\geqslant 0}$ on $\T^d\times\mathbb S^{d-1}\times\{0,1\}$ where
\begin{equation}\label{eq:noisy-GAD-switch}
    \left\{ \begin{array}{lll}
    \dd X_t &=& \overline{H}_{I_t}(X_t,V_t)+ \sqrt{2\varepsilon}\dd B_t, \\
    \dd V_t &=&(I-V_tV_t^T)\co -\frac{1}\eta \na^2U(X_t)V_t\dd t + \sqrt{2\varepsilon'} \dd B_t' \cf - d \varepsilon' \dd t, \end{array}\right.
\end{equation}
where $(I_t)_{t\geqslant 0}$ is as in the ISP, independent from $B,B'$, and
\[\overline{H}_0(x,v)= -\na U(x)\,,\qquad \overline{H}_1(x,v)=-(I-2vv^T)\na U(x)\,.\]

Finally, let us introduce slight variations of the processes defined above. Consider
\[
\mathcal S = \left\{x\in\T^d, \lambda_1(x)=\lambda_2(x) \right\}\,,
\]
which is the set of singularities of the deterministic ISD flow. As pointed out in \cite{ISD-GAD}, some points of $\mathcal S$ may be attracting singularities for this flow, and thus they will also locally attract the noisy ISD when $\varepsilon $ is small, or the noisy GAD when $\varepsilon$ and $\eta$ are small.
 This behaviour may be mitigated as follows. Fix some non-decreasing $f:\R_+\to [1,2]$ such that $f(0)=1$ and $f(r)=2$ for all $r$ larger than some small threshold $r_*>0$. Then, setting $a(x) = f(\lambda_2(x)-\lambda_1(x))$ the drift $H_1$ in the noisy ISD may be replaced by 
 \[\hat H_1(x) = - \po I - a(x) v_1(x)v_1(x)^T - (2-a(x)) v_2(x)v_2(x)^T\pf \na U(x)\,.  \]
 In other words, when $\lambda_1(x)$ and $\lambda_2(x)$ are clearly distinguished, then we recover the previous noisy ISD, however, on $\mathcal S$, rather than undergoing  an orthogonal reflection with respect to $v_1(x)$, $\nabla U(x)$ is orthogonally projected on the orthogonal of the space spanned by $v_1(x),v_2(x)$, which means that, in these two directions, the process behave like a Brownian motion. Natural choices for $f$ would be piecewise constant (with $f(r)=1$ for $r<r_*$) or piecewise linear (with $f(r)=1+r/r_*$ for $r<r_*$). In the second case, the drift $\bar H_1$ is continuous.
 
An analogous modification can be applied to the drift $\overline{H}_1$ of the noisy GAD, in which case the process is then $(X,V_1,V_2)$ where $V_1,V_2$ are orthogonal vectors of $\mathbb S^{d-1}$ whose evolution is given, in the case $\varepsilon'=0$, by 
 \begin{align*}
   \dd V_{1,t} &= -\frac{1}\eta \po I-V_{1,t}V_{1,t}^T  \pf \na^2U(X_t)V_{1,t}\dd t\\
   \dd V_{2,t} & = -\frac{1}\eta \po I-V_{2,t}V_{2,t}^T - 2 V_{1,t}V_{1,t}^T  \pf \na^2U(X_t)V_{2,t}\dd t\,.
 \end{align*}
 (we refer to \cite[Section 3.2.3]{LelievreFreeEnergy} to derive the case $\varepsilon'>0$, where the constraints are now that $|v_1|^2=|v_2|^2=1$ and $v_1^T v_2 = 0$).
 
 This variation can be generalised to any degree of degeneracy of the smallest eigenvalue, i.e. for $k\geqslant 2$ modifying the drift $H_1$ in the vicinity of the set where $\lambda_1(x)=\dots=\lambda_k(x)$  (generically these sets are empty for $k>2$, but in many practical cases they are not, due to some symmetries in the problem).



\subsection{Related works}\label{s-sec:biblio}

The overdamped and underdamped Langevin process have been widely studied for decades. For an introduction in the context of stochastic algorithm, see \cite{LelievreFreeEnergy,lelievre2016partial}. In an optimisation rather than sampling context, it is also possible to use a temperature $\varepsilon$ that varies with time. For example, the simulated annealing uses a temperature that goes to $0$, so that the process converges in probability towards the minima of $U$, see  the classical \cite{Holley} or the more recent  \cite{kinetic-annealing} for more references.



Concerning saddle point search algorithms, we refer to the works already mentioned in Section~\ref{s-sec:intro-dertemini}, namely \cite{henkelman2002methods,E_2002} for minimal path between two known local minima and \cite{Non-conservative-GAD,high-index-GAD,minimisation-formulation,ISD-GAD,Weiman2012,Bofill2015,Gu2018} for  eigenvector-following type algorithms like ISD or GAD. One could also see a saddle point as a minimum of the function $x\mapsto|\na U(x)|^2$ or variants, which is done in \cite{duncan2014biased,Bonfanti2017}.   


Although the use of switched diffusion processes for stochastic optimization algorithms and the specific processes introduced in Section~\ref{s-sec:def-process} are new, there are already many works on switched (also called Markov-modulated) diffusion processes, usually in contexts and with motivations rather different from ours. In the case of an Ornstein-Uhlenbeck process with switched parameter, it is possible to say much about the long time behaviour of the process, see \cite{Saporta05,Bardet09,Lindskog20}. In a more general case, the fast switching and small temperature regime has been studied in \cite{guillin03,HUANG16,Nguyen21}. Switched process are also used for scientific modelling, see \cite{finance} for an example in finance.

\section{Theoretical analysis}\label{sec:Theory}

Before turning to numerical experiment, we are interested in the existence of the different processes, and some of their properties, mainly in the long time limit. For the well-known Langevin process,  we refer to \cite{LelievreFreeEnergy,lelievre2016partial}. 

A valuable tool to study long time behavior of a process is the Doeblin condition. A Markov kernel $\mathcal P:\T^d\to\mathcal M^1(\T^d)$ is said to satisfy a Doeblin condition if there exist $c>0$ and a probability measure $\mu$ such that for all $x\in\T^d$, $\mathcal P(x) \geqslant c\mu$. This implies that $(\mathcal P^n)_n$ converges in total variation to the unique probability measure $\mu_{\infty}$ such that $\mu_{\infty}\mathcal P=\mu_{\infty}$, see for example~\cite{HARRIS}. In the continuous-time setting, this can be written as follow: there exists $t_0,c>0$ and a probability measure $\mu$, such that $\delta_xP_{t_0}\geqslant c\mu$ for all $x\in\T^d$, with $P_t$ the semi-group of the process, and $\delta_xP_{t_0}$ is hence the law of the process starting from $x$ a time $t_0$.

\subsection{Noisy ISD and Noisy GAD}\label{s-sec:ISD-theory}




Since we work on the compact torus and the drift of the noisy ISD \eqref{eq:noisy-ISD} is measurable and bounded, the well-posedness and long-time behavior follows from classical results on diffusion processes. We summarize these results on the next proposition. Apart from the continuous-time equation \eqref{eq:noisy-ISD}, we are also interested in the corresponding  Euler-Maruyama scheme. The  noisy ISD is covered by the following:

\begin{prop}\label{prop:ISD-theory}
Consider $\sigma>0$, $b:\T^d\to \R^d$ measurable and bounded, and $X$ solution to
\begin{equation}\label{eq:diffusion-elliptique}
\dd X_t = b(X_t)\dd t + \sigma \dd B_t.
\end{equation}
For any initial condition $x_0\in \T^d$, and Brownian motion $B$, there exists a unique Markov process $X^{x_0}$ solution to equation~\eqref{eq:diffusion-elliptique}, with Brownian motion $B$ and initial condition $x_0$.

Moreover, this process admits a unique invariant probability measure which admits a density with respect to the Lebesgue measure, and the law of the process converges exponentially fast towards this stationary measure in the total variation distance.

The Euler-Maruyama scheme defined for $\delta>0$ by
\[
X_{n+1} = X_{n} + \delta b(X_n) + \sqrt{\delta} G_n
\]with $(G_n)_n$ a sequence of standard Gaussian variables, admits as well a unique stationary measure $\mu^\delta$, and there exists $c,C,\delta_0>0$ independent from $\delta$ such that for all $0<\delta<\delta_0$, $x_0\in\T^d$:
\[
\|\mathcal Law(X_n) - \mu^\delta\|_{TV}\leqslant Ce^{-c\delta n}.
\]
\end{prop}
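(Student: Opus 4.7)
The plan is to establish a Doeblin minorisation in both the continuous-time and the discrete settings and then invoke the Harris-Doeblin theorem recalled just above the statement.

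For strong existence and pathwise uniqueness of \eqref{eq:diffusion-elliptique}, I would invoke Veretennikov's theorem, valid since the diffusion coefficient $\sigma I$ is non-degenerate and constant while $b$ is bounded measurable. Uniqueness in law alternatively follows from Girsanov: as $b$ is bounded, Novikov's condition holds and the law of $(X_s)_{s\leqslant t}$ is equivalent to the Wiener law of $x_0+\sigma B$ on $\T^d$, with Radon-Nikodym density $Z_t$ having moments of every order. Time-homogeneity together with weak uniqueness then gives the Markov property. For the long-time behaviour, classical parabolic theory applied to the uniformly elliptic generator $\tfrac{\sigma^2}{2}\Delta+b\cdot\na$ yields a jointly continuous transition density $p_t(x,y)$ satisfying the Nash-Aronson lower bound $p_t(x,y)\geqslant c_1 t^{-d/2}\exp(-c_2|x-y|^2/t)$, which on the compact torus becomes $\inf_{x,y\in\T^d} p_t(x,y)\geqslant c(t)>0$ for every $t>0$. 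This is the Doeblin minorisation $\delta_x P_t \geqslant c(t)\,\mathrm{Leb}$ uniformly in $x$, from which the Harris-Doeblin argument simultaneously gives existence and uniqueness of an invariant probability measure, its absolute continuity with respect to Lebesgue (average the uniform lower bound over the invariant law), and exponential convergence in total variation.

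For the Euler-Maruyama scheme, the same strategy applies with $n_0=\lceil 1/\delta\rceil$ iterations playing the role of a fixed continuous horizon. Telescoping the recursion yields
\[
X_{n_0} \;=\; x_0 + \delta\sum_{k=0}^{n_0-1}b(X_k) + \sqrt\delta\sum_{k=0}^{n_0-1}G_k,
\]
so that the accumulated drift has Euclidean norm at most $n_0\delta\|b\|_\infty=O(1)$ while the Gaussian contribution has covariance $n_0 I$. A discrete Girsanov argument (change of measure for each increment $G_k$ recentred at $\sqrt\delta\,b(X_k)$) compares the $n_0$-step kernel with that of a pure wrapped-Gaussian random walk of step variance $\delta$; after $n_0$ steps the latter has total variance $n_0\delta\in[1,2]$ and its density on $\T^d$ is bounded below uniformly in $\delta\in(0,\delta_0)$. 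This produces a uniform Doeblin minorisation $\mathcal P_\delta^{n_0}(x,\cdot)\geqslant c\,\mathrm{Leb}$, and Doeblin's theorem in discrete time yields uniqueness of $\mu^\delta$ together with the claimed geometric bound $\|\mathcal L(X_n)-\mu^\delta\|_{TV}\leqslant C(1-c)^{\lfloor n/n_0\rfloor}\leqslant Ce^{-c'\delta n}$.

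The main obstacle is the uniformity of the Doeblin constant in $\delta$ for the Euler scheme: the one-step kernel has a wrapped-Gaussian density of order $\delta^{-d/2}$ at its mode but vanishes rapidly away from it, so minorisation by Lebesgue fails for a single step. The choice $n_0\asymp 1/\delta$ is precisely what is needed to let the Gaussian contribution reach an $O(1)$ variance while keeping the accumulated drift bounded, and both the boundedness of $b$ and the compactness of $\T^d$ are essential here.
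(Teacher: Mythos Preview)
Your approach coincides with the paper's at the structural level: establish a Doeblin minorisation with Lebesgue as reference measure, both for the diffusion and for the $n_0\asymp 1/\delta$-step Euler kernel, and conclude via Harris--Doeblin. The differences are only in the auxiliary tools. For well-posedness you invoke Veretennikov's strong-solution theorem where the paper cites the Stroock--Varadhan martingale-problem framework; both are standard under bounded measurable drift and constant non-degenerate diffusion. For the continuous-time lower bound you appeal to Aronson's two-sided heat-kernel estimates, whereas the paper shows, via results of Bogachev--Krylov--R\"ockner, that the time-marginal has a density in a parabolic Sobolev class and then applies a Harnack inequality to obtain $\inf_{x,y}p_{t_0}(x,y)>0$; on the compact torus the two routes are interchangeable. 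For the Euler scheme the paper does not use Girsanov but argues directly from the telescoped identity $\bar X_{n}=x_0+\delta\sum_{k<n}b(X_k)+\sqrt{n\delta}\,G$: the cumulative drift has norm at most $t\|b\|_\infty$, so the probability of hitting a set $A$ is bounded below by the infimum over bounded shifts of a Gaussian of variance $n\delta\approx t$ evaluated on $A$. Your discrete-Girsanov recentring is a different packaging of the same mechanism and yields the same $\delta$-uniform constant; your explicit identification of why $n_0\asymp 1/\delta$ is the correct scale (Gaussian variance of order one while the accumulated drift remains bounded) is exactly what the paper's choice $n=\lfloor t/\delta\rfloor$ encodes.
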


\begin{proof}
The existence of a unique solution $X^{x_0}$ to Equation~\eqref{eq:diffusion-elliptique} with initial condition $x_0$ comes from~\cite[Corrolary 7.1.7 and 8.1.7]{Strook-Varadhan}. By It\^o's formula, the law $M^{x_0}$ of $(X^{x_0}_t)_{t\in[0,T]}$ (a probability measure on $\C([0,T],\T^d)$) is a measure solution to the Kolmogorov equation
\[
\partial_t M^{x_0} = L^*M^{x_0}, 
\]
where $L^*$ is the dual in $L^2(\R^d)$ of the generator 
\[L\varphi = b(x)\cdot\na \varphi + \frac{\sigma^2}{2}\Delta \varphi,\]in the sense that for all function $\varphi\in\C^2([0,T]\times\T^d, \R)$
\[\int_{[0,T]\times\T^d}(\partial_t - L)\varphi(t,x) M^{x_0}(\dd t,\dd x) = 0 .\]
Thus, according to \cite[Proposition 6.5.1]{Krylov}, $\mathcal Law(X_t^{x_0})$ (the time-marginal of $M^{x_0}$) admits a density $h^{x_0}(t,\cdot)$ with respect to the Lebesgue measure, for all $t>0$, and for any compact interval $J\subset\R_+$ \[
h^{x_0}\in \mathbb H^{p,1}(J) := \left\{ u:\R_+\times\T^d\to \R, \int_J \|u(t,\cdot)\|_{H^{p,1}} \dd t <\infty \right\},
\]
where $\|\cdot\|_{H^{p,1}}$ is the classical Sobolev norm, for some $p>d+2$. Since we are in a compact set, $h^{x_0}\in \mathbb H^{2,1}$, and \cite[Proposition 6.2.7]{Krylov} yields that $h^{x_0}$ satisfies an Harnack inequality for $t$ great enough: there exists $C,\tau>0$ such that for all $x_0\in\T^d$ and $t$ great enough:
\[
\sup_{\T^d}h^{x_0}(t-\tau,\cdot) \leqslant C\inf_{\T^d}h^{x_0}(t,\cdot).
\]
Since $h^{x_0}(t-\tau,\cdot)$ is a probability density, we have $\sup_{\T^d}h^{x_0}(t-\tau,\cdot)\geqslant \mathrm{Leb}(\T^d)$, where $\mathrm{Leb}$ stands for the Lebesgue measure. In particular, there exists $t_0>0$ such that:
\[
\inf_{x_0\in\T^d}\inf_{\T^d}h^{x_0}(t_0,\cdot) >0.
\]
This is a Doeblin condition for the law of the process, with reference measure the Lebesgue measure, and in particular it classically implies the existence of a unique stationary measure for the process~\eqref{eq:noisy-ISD} and the exponential convergence of the law at time $t$ toward this equilibrium.

For the Euler-Maruyama scheme, fix $n=\lfloor t/\delta \rfloor$, and write:
\[
X_n^{x_0} = x_0 + \delta \sum_{k=0}^{n-1} b(X_k^{x_0}) + \sqrt{\delta}  \sum_{k=0}^{n-1} G_k, 
\]
and 
\[
G = n^{-1/2}\sum_{k=0}^{n-1} G_k.
\]
We have $\delta \sum_{k=0}^{n-1} b(X_k^{x_0}) \leqslant t \|b\|_{\infty}$. If $A\subset \T^d$ is a measurable set, we are looking for a lower bound on $\mathbb P(X_n^{x_0}\in A)$. Let $\bar X_n^{x_0}$ be the Euler scheme seen on $\R^d$ rather than $\T^d$ (considering $b$ as a periodic function), with initial condition given as the representant of $x_0$ in $[0,1)^d$ and let $\bar A$ be the intersection of $[0,1)^d$ with the pre-image of $A$ by the periodic quotient $\R^d\rightarrow \T^d$. Then, $\mathbb P(X_n^{x_0}\in A) \geqslant \mathbb P(\bar X_n^{x_0}\in \bar A)$, and
\begin{align*}
    \mathbb P(\bar X_n^{x_0}\in \bar A) &= \mathbb P\left(\sqrt{n\delta}G \in \bar A - x_0 -  \delta \sum_{k=0}^{n-1} b(X_k) \right)\\ &\geqslant \inf_{|a|\leqslant 2\pi d + t\|b\|_{\infty}} \int_{\bar A- a}e^{-|y|^2/(t-\delta)}\dd y \\ & \geqslant e^{-2(4\pi d + t\|b\|_{\infty})/t}\mathrm{Leb}(A).
\end{align*}
In other words, the $n$-step transition kernel of the Euler scheme satisfies a Doeblin condition with constant independent from the step size $\delta$. This implies the existence of a unique stationary measure $\mu^{\delta}$ such that for all $k\in\mathbb{N}$,
\[
\|\mathcal Law(X_{k\lfloor t/\delta \rfloor}) - \mu^\delta\|_{TV}\leqslant \alpha^k\|\mathcal Law(X_0) - \mu^\delta\|_{TV}.
\]
This gives for all $n\in\mathbb N$
\begin{align*}
\|\mathcal Law(X_n) - \mu^\delta\|_{TV} 
&\leqslant \|\mathcal Law(X_{\lfloor t/\delta \rfloor\lfloor n/\lfloor t/\delta \rfloor \rfloor}) - \mu^\delta\|_{TV} \\ 
&\leqslant \alpha^{\lfloor n/\lfloor t/\delta \rfloor \rfloor}\|\mathcal Law(X_0) - \mu^\delta\|_{TV} \leqslant Ce^{-c\delta n},    
\end{align*}
which concludes the proof.
\end{proof}


We now turn to the analysis of the noisy GAD \eqref{eq:noisy-GAD}. When $\varepsilon'>0$, we get an elliptic diffusion with bounded drift on a smooth compact manifold with no boundary, so essentially the adaptation of Proposition~\ref{prop:ISD-theory} in this more general context apply. In the following we focus on the case $\varepsilon'=0$. Indeed, one may think that it is only necessary to have noise on the position in order to visit the whole space, and then the auxiliary vector $V$ will go to the associated eigenvectors. The question is whether the process is hypoelliptic and controllable. However, it is clear that it cannot be the case in general, simply by considering  counter-examples of the form
\[
U(x) = \sum_{i=1}^d U_i(x_i)\,.
\]
Indeed, in these cases, if e.g.  $v_0=(1,0,...,0)$, then $v_t=v_0$ for all $t\geqslant 0$, hence there are no hope of having a unique stationary measure, and convergences towards it for all initial conditions. Let us discuss some some additional  conditions that ensure this  result. First, notice that if $(X_t,V_t)_{t\geqslant 0}$ is a noisy GAD then so is $(X_t,-V_t)_{t\geqslant 0}$. In fact the relevant variable to define the process is not $V_t$ but the class $\{V_t,-V_t\}$ on the projective sphere $\mathbb P^d = \mathbb S^{d-1}/\mathcal R$ with  the equivalence relation given by $v\mathcal R u$ iff $u=-v$ or $u=v$, and we identify the process with its projection on $\mathbb T^d\times \mathbb P^d$ (which is still a Markov process, solution of the same SDE \eqref{eq:noisy-GAD} but where the drift of $V_t$ is seen as a vector field on $\mathbb P^d$).

\begin{prop}\label{prop:hypocoer}
If $U$ is $\mathcal C^3$, then the noisy GAD is well defined. If in addition, we suppose that the potential $U$ satisfies the following:
\begin{itemize}
    \item There exists $(\tilde x,\tilde v)\in\T^d\times\mathbb P^{d}$ such that:
    \[\po \partial_{x_i} \na^2U(\tilde x)\tilde v \pf_{1\leqslant i\leqslant d}\]spans $\tilde v^\perp$. 
    \item For all $v\in \mathbb P^{d}$, there exists $H$ in the convex hull of the set
    \[
    \left\{ \na^2U(x), x\in \T^d \right\}
    \]such that, denoting by $(\tilde \lambda_i)$ the ordered eigenvalues of $H$ and $(\tilde v_i)$ the associated eigenvectors,
    \[    \left\{ \begin{array}{lll}
    v &=& \tilde v_1, \\
    \tilde \lambda_1 &<& \tilde \lambda_2. \end{array}\right.\]
Then the noisy GAD admits a unique stationary measure $\mu_{GAD}$ on $\mathbb T^d\times \mathbb P^{d}$ with a positive density with respect to the Lebesgue measure. Moreover, for any initial condition, the law at time $t$ of the process converges exponentially fast to $\mu_{GAD}$ in total variation.
\end{itemize}
\end{prop}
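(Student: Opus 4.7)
The plan is to split the proof into a standard well-posedness argument and an ergodicity argument obtained through a Doeblin condition, the latter combining a local Hörmander computation with the Stroock--Varadhan support theorem. This mirrors the structure of the proof of Proposition~\ref{prop:ISD-theory} but must accommodate the degeneracy of the noise in the $V$-variable.

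Well-posedness is routine: since $U\in\C^3$, the drift in \eqref{eq:noisy-GAD} is $\C^1$ on the compact manifold $\T^d\times\mathbb S^{d-1}$, so standard SDE theory provides strong existence and uniqueness. The identity $V_t^T(I-V_tV_t^T)=0$ together with It\^o's formula gives $\dd|V_t|^2=0$, so $V_t\in\mathbb S^{d-1}$ for all $t\geqslant0$; the $v\mapsto -v$ symmetry of the coefficients then allows the process to descend to $\T^d\times\mathbb P^d$.

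The first ingredient for the Doeblin condition is hypoellipticity. Writing the generator in H\"ormander form with drift $X_0$ and diffusion fields $X_i=\sqrt{\varepsilon}\,\partial_{x_i}$ (which have trivial $v$-component), a direct computation shows that the $v$-component of $[X_i,X_0]$ at $(x,v)$ equals, up to a nonzero scalar, $(I-vv^T)\partial_{x_i}\na^2U(x)v$. The first bullet assumption states precisely that at $(\tilde x,\tilde v)$ these brackets, together with $X_1,\dots,X_d$, span the tangent space of $\T^d\times\mathbb P^d$; by continuity of the brackets (ensured by $U\in\C^3$) the H\"ormander condition extends to an open neighborhood $\mathcal U$ of $(\tilde x,\tilde v)$, so H\"ormander's theorem produces a smooth transition density $p_t$ on $\mathcal U$.

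The second ingredient is controllability via the support theorem: replacing the Brownian increments by a $\C^1$ control $u_t$ yields a control system in which $X$ can be driven along any prescribed $\C^1$ path, while $V$ then follows the corresponding time-dependent ODE on $\mathbb P^d$. Given a target $\tilde v_1\in\mathbb P^d$, the second bullet assumption supplies $H=\sum_k\alpha_k\na^2U(x_k)$ having $\tilde v_1$ as its leading eigenvector with strict spectral gap; forcing $X$ to cycle rapidly between the points $x_k$ with occupation weights $\alpha_k$, a two-timescale averaging argument shows that on the slow $V$-scale the dynamics is well approximated by the gradient flow on $\mathbb P^d$ for the Rayleigh quotient of $H$, whose spectral gap drives $V$ exponentially close to $\tilde v_1$. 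Running the argument in both directions shows that a neighborhood of $(\tilde x,\tilde v)$ is reachable from any initial condition and any target is reachable from it; combined with the smooth positive density inside $\mathcal U$, this yields a Doeblin condition $\delta_{(x_0,v_0)}P_{t_0}\geqslant c\,\mu$, from which existence and uniqueness of $\mu_{GAD}$ with positive density and exponential convergence in total variation follow as in the conclusion of the proof of Proposition~\ref{prop:ISD-theory}. The main obstacle is making the averaging step quantitative: one has to control the drift of $V$ uniformly while $X$ oscillates on a fast time scale, and then combine it with the spectral-gap estimate on $\mathbb P^d$ to guarantee entry into an arbitrarily small neighborhood of the prescribed eigenvector.
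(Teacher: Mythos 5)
Your proposal follows essentially the same two-part strategy as the paper: (i) use the first assumption to verify a weak H\"ormander condition at $(\tilde x,\tilde v)$, computing that the brackets $[X_i,X_0]$ have $v$-components proportional to $(I-vv^T)\partial_{x_i}\na^2U(x)v$, and (ii) establish controllability by driving the $x$-component rapidly through the points $x_k$ appearing in the convex combination for $H$ with occupation weights $\alpha_k$, so that a two-timescale averaging argument makes $V$ follow the Rayleigh-quotient gradient flow of $H$, whose spectral gap $\tilde\lambda_1<\tilde\lambda_2$ drives $V$ to the target eigenvector. That is exactly the construction in the paper. The paper then invokes results from the controllability and density literature to go from H\"ormander-plus-controllability to a unique stationary measure with positive density and exponential convergence, rather than assembling a Doeblin bound by hand around the hypoelliptic neighborhood as you sketch, but the two routes to the conclusion are equivalent. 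You correctly identify the averaging step as the point needing quantitative work; the paper makes it explicit by choosing a piecewise control parametrized by a small $\gamma$, showing Lipschitz bounds and a Gr\"onwall estimate so that the controlled $v$-trajectory converges uniformly to the flow for $H$ as $\gamma\to 0$. One small addition in the paper that you omit is the separate treatment of the degenerate case where the initial and target eigenvectors are orthogonal in $\mathbb P^d$, handled by passing through an intermediate direction; this fits naturally into your route-through-$(\tilde x,\tilde v)$ scheme but should be stated.
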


\begin{rem}
The second condition is implied by: For all $v\in \mathbb P^{d}$, there exists $x\in\T^d$ such that
    \[    \left\{ \begin{array}{lll}
    v &=& v_1(x), \\
    \lambda_1(x) &<& \lambda_2(x). \end{array}\right.\]
\end{rem}

\begin{proof}
If $U$ is $\mathcal C^3$, then the noisy GAD is an SDE with Lipschitz coefficient, hence its well-posedness.
The first additional assumption ensures that the generator of the noisy GAD
\[
\mathcal L = -(I-2vv^t)\na U(x)\cdot \na_x + \varepsilon \Delta_x - \frac{1}{\eta}(I-vv^t)\na^2U(x) v \cdot\na_v
\]
satisfies a weak H{\"o}rmander condition at $(\tilde x,\tilde v)$, see~\cite{controllability}. 

Next, we want to establish the controllability of the process, in the following sense: fix $( x_{ini}, v_{ini}),\ ( x_f, v_f)\in \mathbb T^d\times \mathbb P^{d}$, $\delta>0$. We want to show that there exists $T>0$ and a control $u\in \mathcal C^0([0,T],\R^d)$ such that the solution of 
\begin{equation}\label{eq:control}
    \left\{ \begin{array}{lll}
    x'(t) &=& -\left(I-2v(t)v(t)^T\right)\na U(x(t)) + u(t), \\
    v'(t) &=& -\frac{1}\eta (I-v(t)v(t)^T)\na^2U(x(t))v(t), \end{array}\right.
\end{equation}
with $(x(0),v(0))=( x_{ini}, v_{ini})$, satisfies $\left\| (x(1),v(1))-( x_f, v_f) \right\| \leqslant \delta$. 

We first suppose that $v_{ini}$ and $v_f$ are not orthogonal. Fix $H$ such that the second condition of Proposition~\ref{prop:hypocoer} is satisfied, and $x_1,\dots,x_p$, and $\alpha_1,\dots,\alpha_p$ such that
\[
H = \sum_{i=1}^p \alpha_i\na^2U(x_i),\quad \sum_{i=1}^p \alpha_i = 1.
\]
Fix $T>0$, $n\in\N$, and $\gamma>0$ such that $T=n(\gamma+\gamma^2 + 2\gamma^4) + \gamma^2 + \gamma^4$. Write $\beta_i = \alpha_1 + ... + \alpha_{i}$, and for $k\in \llbracket 0,n \rrbracket$:
\begin{equation*}
    u (t) = \left\{ \begin{array}{lll}
    (-x_{ini} +  x_1)/(\alpha_1\gamma^2) &\text{ if }& 0< t < \alpha_1\gamma^2, \\
    (I-2v(t)v(t)^t)\na U(x_1) &\text{ if }& \alpha_1(\gamma^2+\gamma^4) < t < \alpha_1(\gamma+\gamma^2 + \gamma^4), \\
    ( x_{i+1} - x_i)/(\alpha_{i+1}\gamma^2) &\text{ if }& (k+\beta_i)(\gamma + \gamma^2 + 2\gamma^4) \\ & & \qquad\qquad < t < (k+\beta_i)(\gamma + \gamma^2 + 2\gamma^4) + \alpha_{i+1}\gamma^2,\\
    (I-2v(t)v(t)^t)\na U(x_i) &\text{ if }& (k+\beta_i)(\gamma + \gamma^2 + 2\gamma^4) + \alpha_{i+1}(\gamma^2 +\gamma^4) \\ & & \qquad \qquad< t < (k+\beta_{i+1})(\gamma + \gamma^2 + 2\gamma^4) - \alpha_{i+1}\gamma^4,\\
    (x_f -  x_p)/\gamma^2 &\text{ if }& T -\gamma^2  < t <T, \end{array}\right.
\end{equation*}
and $u$ is linear on the remaining intervals of size $\gamma^4$. Write as well 
\begin{equation*}
    \bar x (t) = x_i\text{ for } (k+\beta_{i-1}) \gamma < t < (k+\beta_{i}) \gamma,\ k\in\N.
\end{equation*}
As $\gamma$ goes to 0, $v$ converges towards the solution to
\[
\bar v'(t) = -\frac{1}\eta (I-\bar v(t)\bar v(t)^T)\na^2U(\bar x(t))\bar v(t)
\]
which is itself close to 
\[
w'(t) = -\frac{1}\eta (I-w(t)w(t)^T)Hw(t).
\]
For clarity, we only show the second point which is the more complex one. We may write for all $0\leqslant u \leqslant T$:
\begin{align*}
    &\left\|\int_0^u (I-\bar v_s\bar v_s^t)\na^2U(\bar x)\bar v_s \dd s - \int_0^u (I-\bar v_s\bar v_s^t)H\bar v_s \dd s\right\| \\ &\quad \leqslant \left\|\int_0^u (I-\bar v_s\bar v_s^t)\na^2U(\bar x)\bar v_s \dd s - \sum_{k=0}^{m-1}\sum_{i=1}^p(I-\bar v_{k\gamma}\bar v_{k\gamma}^t)\na^2U(x_i)\bar v_{k\gamma}\alpha_i\gamma\right\| \\ &\quad  +\left\| \sum_{k=0}^{m-1}\sum_{i=1}^p (I-\bar v_{k\gamma}\bar v_{k\gamma}^t)\na^2U(x_i)\bar v_{k\gamma}\alpha_i\gamma - \sum_{k=0}^{m-1}\sum_{i=1}^p(I-\bar v_{k\gamma}\bar v_{k\gamma}^t)H\bar v_{k\gamma}\gamma \right\| \\ &\quad + \left\|\sum_{k=0}^{m-1}\sum_{i=1}^p(I-\bar v_{k\gamma}\bar v_{k\gamma}^t)H\bar v_{k\gamma}\gamma - \int_0^u (I-\bar v_s\bar v_s^t)H\bar v_s \dd s \right\|.
\end{align*}
where $m=\lfloor u/\gamma \rfloor$. By hypothesis on $H$, the second term is $0$. $\bar v$ depends on $\gamma$ by definition, but it is $\|\na U\|_{\infty}$-Lipschitz for all $\gamma>0$, hence the first and third term are going to $0$ as $\gamma\rightarrow 0$, uniformly on $0\leqslant u \leqslant T$. The map $v\mapsto (I-vv^t)Hv$ is $\mathcal C^1$ on a compact set, hence is Lipschitz. Thus we have:
\begin{equation*}
    \|\bar v(u) - w(u) \| \leqslant L\int_0^u \sup_{r<s}\|\bar v(r) - w(r) \| \dd s + \left\|\int_0^u (I-\bar v_s\bar v_s^t)\na^2U(\bar x)\bar v_s \dd s - \int_0^u (I-\bar v_s\bar v_s^t)H\bar v_s \dd s\right\|,
\end{equation*}
and
\[
\sup_{0\leqslant u\leqslant} \|\bar v(u) - w(u) \| \leqslant e^{LT}\sup_{0\leqslant u\leqslant}\left\|\int_0^u (I-\bar v_s\bar v_s^t)\na^2U(\bar x)\bar v_s \dd s - \int_0^u (I-\bar v_s\bar v_s^t)H\bar v_s \dd s\right\| \underset{\gamma\rightarrow 0}{\rightarrow}0.
\]
Now, all it remains to do is to fix $T>0$ great enough so that $w(T)$ is in the $\delta$-neighborhood of $v_f$, then $\gamma$ small enough so that $v(T)$ is as well, and $n$ accordingly.

If $v_{ini}$ and $v_f$ are orthogonal, writing $\pi$ for the canonical projection from  $\mathbb S^{d-1}$ to $\mathbb P^{d}$, there exists an intermediate $v_{inter}$ which is not orthogonal to neither  $v_{ini}$ or $v_f$, and the same control allows to go from $(x_{ini},v_{ini})$ to $(x_{ini},v_{inter})$ in a time $T$, and from $(x_{ini},v_{inter})$ to $(x_{f},v_{f})$ in another time $T$.

We conclude with \cite[Theorem 2.1]{controllability} for the existence of the stationary measure and the convergence, and with \cite[Theorem 5.2]{density} for the positive density.
\end{proof}

\subsection{Switched  processes }\label{s-sec:Switched-theory}

When the mode $(I_t)_{t\geqslant 0}$ is an autonomous Markov chain on a finite set $\cco 0,m-1\ccf$, $m\in\N$ (i.e., as it is the case in the processes we have considered in Section~\ref{s-sec:def-process}, if its jump rate does not depend on $X$) then the well-posedness of switched processes of the form
\begin{equation}\label{eq:diffusion-gen}
\dd   X_t = H_{I_t}(X_t)\dd t + \sigma_{I_t}(X_t)\dd B_t,
\end{equation}
with some drifts $H_0,\dots, H_{m-1}$ immediately follows from the well-posedness of the corresponding diffusion processes for each $i\in\cco 0,m-1\ccf$, since the process is then simply defined by induction along the jump times of $I$. In particular, in view of the previous section, the ISP \eqref{def:diffswitch} is well defined. 

Moreover, in general, it is sufficient that one of the diffusion processes satisfies a Doeblin condition to imply the same for the switched process: 




\begin{prop}\label{prop:swith-theory}

If $I$ is irreducible and there exists $i_0$ such that the diffusion~\eqref{eq:diffusion-gen} with $I_t=i_0$  satisfies a Doeblin condition (see the introduction of Section~\ref{sec:Theory}), then the switched process admits a unique invariant probability measure, and the law of the switched process converges exponentially fast towards this stationary measure.
\end{prop}

\begin{proof}
The existence results from the previous construction.
Fix $t_0>0$ such that for all $t>t_0$, $x\in\R^d$, $A\subset\T^d$ measurable, 
\[
\mathbb P_x(\bar X_t \in A) \geqslant cl(A)
\]
where $c>0$ and $\bar X$ is a solution to equation~\eqref{eq:diffusion-gen} with $I_0=i_0$. Since $I$ is an irreducible Markov process, we have for $t>0$
\[
\mathbb P(I_s = i_0,\ \forall s\in [t,2t]) >0.
\]
Hence we have:
\begin{align*}
    \mathbb P(X_{2t}\in A, I_{2t} = i_0) &\geqslant \mathbb P(X_{2t}\in A, I_s = i_0,\ \forall s\in [t,2t]) \\ & =\E\po\mathbb P_{X_t}\po \bar X_t \in A \pf \mathbbm{1}_{I_s=i_0, \forall s\in[t,2t]} \pf \\ &\geqslant cl(A) \mathbb P(I_s = i_0,\ \forall s\in [t,2t]) \\ &= \tilde c l(A),
\end{align*}
which is a Doeblin condition with reference measure $l\otimes \delta_{i_0}$, and this concludes the proof.
\end{proof}

\begin{cor}
The ISP is well-defined, admit stationary measure, and its laws converge as $t$ goes to infinity exponentially fast towards its unique stationary measure in the total variation distance.
\end{cor}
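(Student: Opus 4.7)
The ISP is the special case of the generic switched diffusion \eqref{eq:diffusion-gen} with $m=2$ modes, constant diffusion coefficient $\sigma_0=\sigma_1=\sqrt{2\varepsilon}$, and drifts $H_0(x)=-\na U(x)$ and $H_1(x)=-(I-2v_1(x)v_1(x)^T)\na U(x)$. Since $U\in\mathcal C^2(\T^d,\R_+)$ is defined on the compact torus, $\na U$ is continuous and bounded; the measurability of $v_1$ provided by the preliminary lemma of Section~\ref{s-sec:def-process} then ensures that $H_1$ is measurable and bounded as well. Both modes therefore fall under the scope of Proposition~\ref{prop:ISD-theory}.

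For well-posedness, the plan is to follow the inductive construction sketched just after equation \eqref{eq:diffusion-gen}. The mode $(I_t)$ is a continuous-time Markov chain on $\{0,1\}$ driven by a Poisson clock of rate $\nu>0$, so its jump times form an almost surely locally finite subset of $\R_+$. Between two consecutive jump times, $X$ solves one of the two SDEs of the form \eqref{eq:diffusion-elliptique} with a bounded measurable drift on $\T^d$, for which Proposition~\ref{prop:ISD-theory} provides a unique strong solution given the endpoint of the previous interval as initial condition. Pasting these pieces at the jump times of $I$ yields a unique Markov process $(X_t,I_t)_{t\geqslant 0}$ on $\T^d\times\{0,1\}$.

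For the long-time behavior, I apply Proposition~\ref{prop:swith-theory} directly. The two-state chain $I$ with positive jump rate is clearly irreducible. Taking $i_0=0$, the SDE \eqref{eq:diffusion-gen} with $I_t\equiv 0$ is exactly the overdamped Langevin dynamics \eqref{eq:Langevin}, which by Proposition~\ref{prop:ISD-theory} satisfies the Doeblin condition required as hypothesis (the same would be true for $i_0=1$, using the noisy ISD). Proposition~\ref{prop:swith-theory} then produces a unique invariant probability measure for the ISP and the exponential convergence in total variation of the law of $(X_t,I_t)$ towards it, which is the statement of the corollary.

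No step is expected to pose a genuine obstacle: the corollary is really a direct combination of Propositions~\ref{prop:ISD-theory} and~\ref{prop:swith-theory}. The only items to check are the boundedness and measurability of the two drifts, which are automatic from $U\in\mathcal C^2(\T^d,\R_+)$ and the measurable selection of $v_1$, together with the irreducibility of the two-state mode chain, which is obvious.
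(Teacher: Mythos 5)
Your proposal is correct and follows exactly the route the paper intends: the corollary is stated without proof as an immediate consequence of the inductive construction of switched diffusions described before Proposition~\ref{prop:swith-theory} (well-posedness via Proposition~\ref{prop:ISD-theory} applied on each inter-jump interval) combined with Proposition~\ref{prop:swith-theory} itself (Doeblin condition for one of the modes plus irreducibility of the two-state chain). Your write-up simply spells out the details the authors leave implicit.
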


\section{Numerical experiments}\label{sec:Num}

The main goal of the present work is to gain some empirical insights on the qualitative behavior of the processes introduced in Section~\ref{s-sec:def-process} on simple models. Some questions that we have in particular are the following: what happens if the direction of the gentlest ascend starting from a local minimum does not correspond to the direction the process has to take  eventually to find a saddle point? How does the process behave in front of attractive singularities of the ISD flow? How does the efficiency of the process (in terms of exploration) depend on the switching rate? How does it compare to a basic overdamped Langevin process?

We will consider three models. The first one is a mixture of Gaussian, for which we can easily chose where are the minima and the saddle point and what are the gentlest ascend directions at the minima.  The second one is the function studied in~\cite{ISD-GAD}, for which the unique saddle point is separated from the two minima by singular lines. Finally, the third example, in higher dimension and closer to a genuine application, is the Lennard-Johns cluster model with $7$ particles.

Contrary to the theoretical analysis, most processes here live on $\R^d$, which may raise a stability issue. This can be seen in dimension $1$, where the process switch between a gradient descent and a gradient ascent, and  thus can be non recurrent, or even  explosive e.g. for potential of the form $|x|^4$ at infinity. This can be solved by a suitable modification of the dynamics outside some compact set, for instance replacing $H_1$ in \eqref{def:diffswitch} by $\tilde H_1(x) = H_1(x) 1_{|x|\leqslant R} + H_0(x) 1_{|x|>R}$ or a smooth interpolation, or adding a deterministic jump from $I_t=1$ to $I_t=0$ when $|X_t|\geqslant R$. Then, classical Lyapunov conditions on $U$ implies that the process remains stable. However, this will not be necessary in our simple numerical experiments.

\subsection{First 2D model: a mixture of Gaussians}\label{s-sec:GAD-num-1}

We consider here a mixture of Gaussian in dimension 2. The potential is of the form:
\[
U_1(x,y) = -\ln{\left(\frac{1}{2}e^{-(x^2+y^2)} + \frac{1}{2}e^{-s_x(x-m_x)^2-s_y(y-m_y)^2}\right)},
\]
with some parameters $m_x,m_y\in \R$, $s_x,s_y>0$. This is a classical model in statistics for multi-modal problems. Here we take $(m_x,m_y)=(4,0)$ and consider two cases, either $(s_x,s_y)=(3,1)$ or $(s_x,s_y)=(1,3)$, see Figure~\ref{fig:evolution-(3,1)} and~\ref{fig:evolution-(1,3)}. In those figures, we represent a typical trajectory of the noisy ISD and of the ISP, with $\varepsilon=0.03$, the final time $T=10$, and $\nu = 0.2$ in the case of the ISP, as well as the level lines of the potential. In these two cases, there are two local minimizers $(\tilde x_1,0)$ and $(\tilde x_2,0)$ separated by a unique saddle point $(\tilde z,0)$, where $\tilde x_1<\tilde z < \tilde x_2$, but they differ by the direction taken by the saddle search in the well from the left. The yellow part represent the trajectories of the noisy ISD, or the part of the ISP where $I_t=1$, and in blue the part of the ISP where $I_t=0$ (corresponding to a gradient descent).

\begin{figure}
\begin{subfigure}{.5\textwidth}
  \centering
  \includegraphics[width=.9\linewidth]{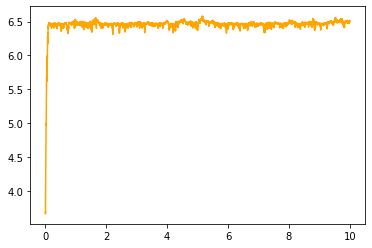}
  \caption{Evolution of $U$ along the noisy ISD.}
  \label{sfig:ener-ISD(3,1)}
\end{subfigure}%
\begin{subfigure}{.5\textwidth}
  \centering
  \includegraphics[width=.9\linewidth]{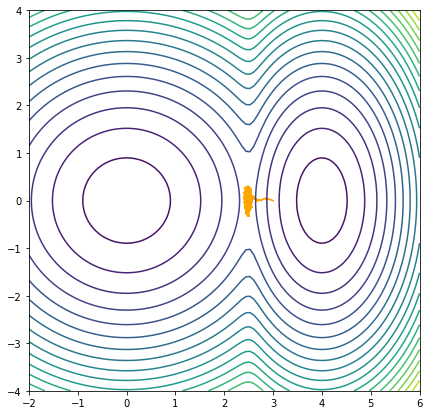}
  \caption{Trajectory of the noisy ISD.}
  \label{sfig:traj-ISD(3,1)}
\end{subfigure}
\\
\begin{subfigure}{.5\textwidth}
  \centering
  \includegraphics[width=.9\linewidth]{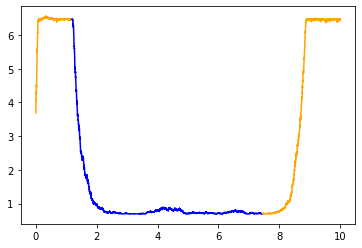}
  \caption{Evolution of $U$ along the switched process.}
  \label{sfig:ener-SWI(3,1)}
\end{subfigure}%
\begin{subfigure}{.5\textwidth}
  \centering
  \includegraphics[width=.9\linewidth]{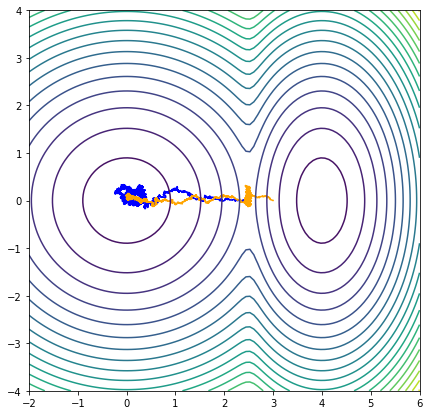}
  \caption{Trajectory of the switched process.}
  \label{sfig:traj-SWI(3,1)}
\end{subfigure}
\caption{Case $(s_x,s_y)=(3,1)$ }
\label{fig:evolution-(3,1)}
\end{figure}

\begin{figure}
\begin{subfigure}{.5\textwidth}
  \centering
  \includegraphics[width=.9\linewidth]{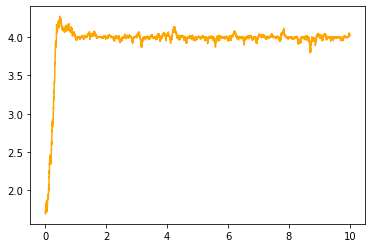}
  \caption{Evolution of $U$ along the noisy ISD.}
  \label{sfig:ener-ISD(1,3)}
\end{subfigure}%
\begin{subfigure}{.5\textwidth}
  \centering
  \includegraphics[width=.9\linewidth]{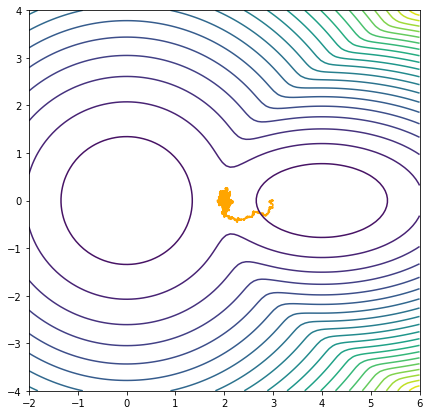}
  \caption{Trajectory of the noisy ISD.}
  \label{sfig:traj-ISD(1,3)}
\end{subfigure}
\\
\begin{subfigure}{.5\textwidth}
  \centering
  \includegraphics[width=.9\linewidth]{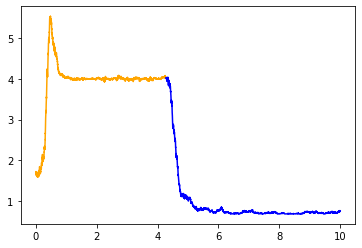}
  \caption{Evolution of $U$ along the switched process.}
  \label{sfig:ener-SWI(1,3)}
\end{subfigure}%
\begin{subfigure}{.5\textwidth}
  \centering
  \includegraphics[width=.9\linewidth]{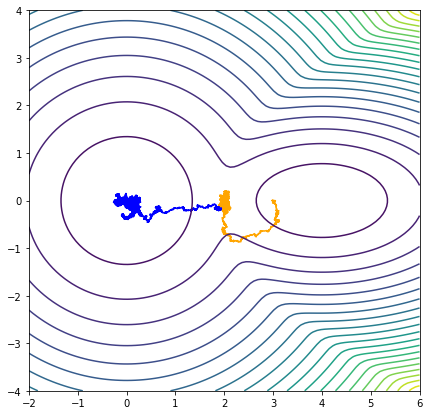}
  \caption{Trajectory of the switched process.}
  \label{sfig:traj-SWI(1,3)}
\end{subfigure}
\caption{Case $(s_x,s_y)=(1,3)$.}
\label{fig:evolution-(1,3)}
\end{figure}


According to the Large Deviation Principle, for a small $\varepsilon$,  if the initial condition corresponds to the right local minima $(\tilde x_2,0)$, the overdamped Langevin  process \eqref{eq:Langevin} will typically stays in the vicinity of this minimizer for a time exponentially long with $\varepsilon$, and will likely go from there to the other minimizer by following in reverse the trajectory of a gradient descent starting from a point arbitrarily close to the saddle point (which in the present case is a  straight line from the minimum to the saddle point), see~\cite[Chapter 4, Theorem 2.1]{Freidlin-Wentzell} as well as their computation of the quasi-potential.

In the case $(s_x,s_y)=(3,1)$, with some initial condition $(x_0,0)$, $\tilde z<x_0<\tilde x_2$, the noisy (and deterministic) ISD will follow the same reactive trajectory as the overdamped Langevin process, the gentlest way to go to the saddle point being here along the gradient, but it happens much earlier (since this is the behaviour of the deterministic system at $\varepsilon=0$ and not a large deviation from it). However, in the case, $(s_x,s_y)=(1,3)$, the gentlest way to leave the right minima is to start along the directions $(0,1)$ and $(0,-1)$, see figure~\ref{fig:evolution-(1,3)}. Nevertheless, independently from this difference in the beginning of the trajectory, in both cases the saddle point is found by the deterministic dynamics for initial condition between $\tilde x_1$ and $\tilde x_2$. 

For initial condition $(x_0,y_0)$ with $x_0>\tilde x_2$, the ISD goes to infinity. Hence, the dynamics has to be modified outside some compact set, as explained in the introduction of this section, so that the process does not escape to infinity. In that case, one only need to wait for the process to enter the domain of attraction of the saddle point for the deterministic ISD.

However, as an alternative way to enforce stability in order to get quantitative results, in the present case with a simple illustrative purpose, we simply use a periodized version of the potential. Denote 
\[
\tilde U_1(x,y) = -\ln{\left(\frac{1}{2}e^{-L^2(\sin^2(x/L)+\sin^2(y/L))} + \frac{1}{2}e^{-L^2(s_x \sin^2((x-m_x)/L) + s_y \sin^2((y-m_y)/L))}\right)},
\]
where $L>0$ is a parameter, see Figure~\ref{fig:perio_pot}. This periodized potential has two supplementary saddle point, but for all $(x,y)\in \R^2$, $\lim_{L\rightarrow\infty}\tilde U_1(x,y) = U_1(x,y)$, and we may defined the ISP on $(\pi L\T)^2$.

\begin{figure}
\begin{subfigure}{.5\textwidth}
  \centering
  \includegraphics[width=.9\linewidth]{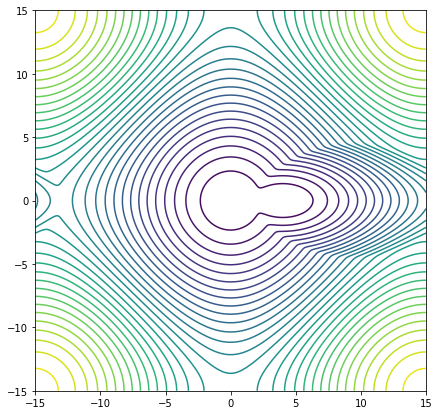}
  \caption{$(s_x,s_y)=(1,3)$.}
  \label{sfig:pot_per_(1,3)}
\end{subfigure}%
\begin{subfigure}{.5\textwidth}
  \centering
  \includegraphics[width=.9\linewidth]{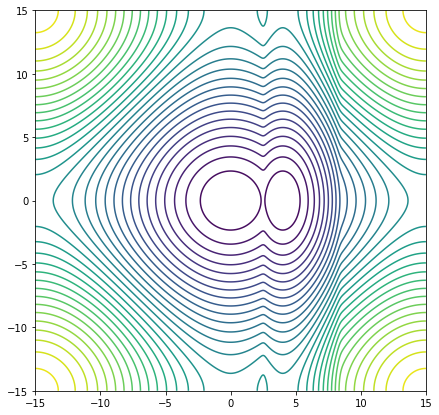}
  \caption{$(s_x,s_y)=(3,1)$.}
  \label{sfig:pot_per_(3,1)}
\end{subfigure}
\caption{Periodized potential.}
\label{fig:perio_pot}
\end{figure}

Using a 2-D histogram, $\nu=0.1$, $\varepsilon=0.05$, we get a representation of the invariant measure in Figure~\ref{fig:mesure_inv}. We see that the invariant measure charges both minima, as well as the saddle point between the two.

\begin{figure}
\begin{subfigure}{.5\textwidth}
  \centering
  \includegraphics[width=.9\linewidth]{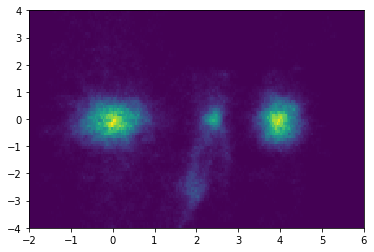}
  \caption{$(s_x,s_y)=(1,3)$.}
  \label{sfig:mesure_inv(1,3)}
\end{subfigure}%
\begin{subfigure}{.5\textwidth}
  \centering
  \includegraphics[width=.9\linewidth]{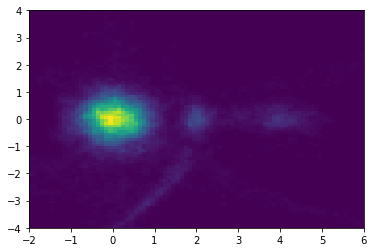}
  \caption{$(s_x,s_y)=(3,1)$.}
  \label{sfig:mesure_inv(3,1)}
\end{subfigure}
\caption{Invariant measure of the ISP for $\nu=0.1$, $\varepsilon = 0.05$.}
\label{fig:mesure_inv}
\end{figure}

Now, if the goal is to find both minima, then one can find one of them using a gradient descent, and the second one using the ISP. Since the ISD allows to find the saddle point, then if the switching rate is low enough, the switched process will do as well, and thanks to the Brownian noise, after a first switch, will have a positive probability to be in the domain of attraction for the gradient descent of the second minima, and thus will find it. If it was not in the right domain of attraction, then the process will go back to the first well, and repeat the same kind of trajectories. The success of the algorithm is then the same as the one of a rigged coin flip. For a small $\varepsilon$, as long as the transitions from one well to another are driven by this switching behaviour, the transition time behaves much better than the exponentially long time of the overdamped Langevin dynamics (with Brownian-driven rare transitions). At a fixed $\varepsilon$, when the switching rate becomes too small, the process has to wait for switching events to cross the saddle (and eventually when it becomes very small the process behaves like the overdamped Langevin dynamics and the transitions are driven by the small Brownian noise); on the other hand, a large switching rate induces an averaging phenomenon (the drift along the gentlest ascend direction vanishes), which impairs the efficiency. Thus, there should be an optimal switching rate in terms of mean transition time. This is indeed what we observed, see Figure~\ref{fig:temps_recherche_minima}.  The numerical experiment were conducted with $\varepsilon=0.03$, $(x_0,y_0)=(4,0)$, $I_0=0$, and the time was estimated by Monte-Carlo with $n=15$ repetitions.

\begin{figure}
  \centering
  \includegraphics[width=.6\linewidth]{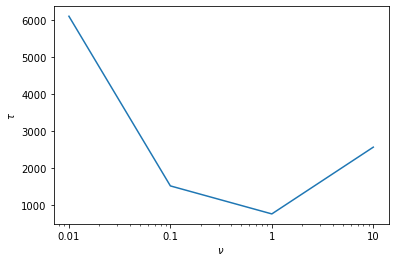}
  \caption{Time to reach a minima from the other one on the periodized potential.}
  \label{fig:temps_recherche_minima}
\end{figure}

\subsection{A 2D model with singular lines}\label{s-sec:GAD-num-2}

Now we are interested in the following potential:
\[
U_2(x,y) = (1-x^2)^2 + 2y^2,
\]
for $(x,y)\in \R^2$, see Figure~\ref{fig:level_line_singu}.

\begin{figure}
  \centering
  \includegraphics[width=.7\linewidth]{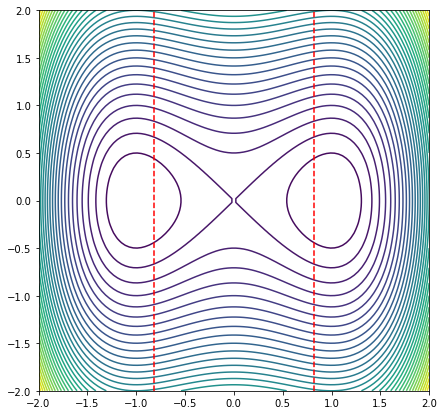}
  \caption{Level line of the potential $U_2$.}
  \label{fig:level_line_singu}
\end{figure}

The deterministic ISD and GAD have been studied for this potential in~\cite{ISD-GAD}. This potential has two minima, $(-1,0)$ and $(1,0)$, and one saddle point $(0,0)$. However, there are two lines, $\left\{x=\pm \sqrt{4/6} \right\}$, for which the Hessian of $U$ has two equal eigenvalues. The deterministic GAD and ISD  cannot cross those lines, and go to infinity, see Figure~\ref{fig:singu-U-deter}. In the noisy case, the Brownian motion may allow the process to cross the line, see Figure~\ref{fig:singu-U-cut0}.

\begin{figure}
\begin{subfigure}{.5\textwidth}
  \centering
  \includegraphics[width=.9\linewidth]{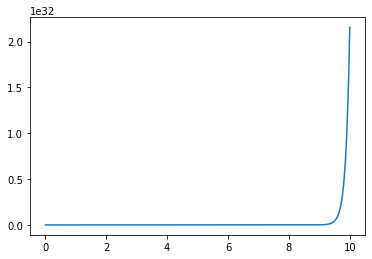}
  \caption{Evolution of $U$ along the deterministic ISD.}
  \label{sfig:ener-singu-deter}
\end{subfigure}
\begin{subfigure}{.5\textwidth}
  \centering
  \includegraphics[width=.9\linewidth]{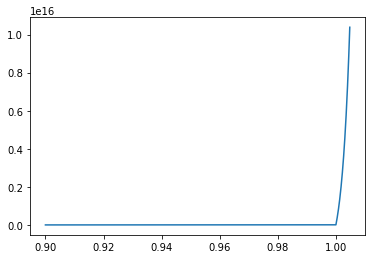}
  \caption{Trajectory of the deterministic ISD.}
  \label{sfig:traj-singu-deter}
\end{subfigure}
\caption{$\varepsilon=0.0$.}
\label{fig:singu-U-deter}
\end{figure}

\begin{figure}
\begin{subfigure}{.5\textwidth}
  \centering
  \includegraphics[width=.9\linewidth]{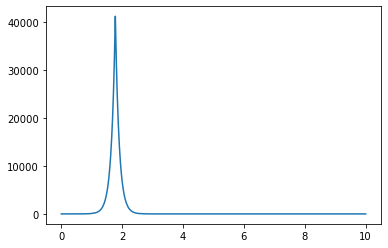}
  \caption{Evolution of $U$ along the noisy ISD.}
  \label{sfig:ener-singu-cut0}
\end{subfigure}
\begin{subfigure}{.5\textwidth}
  \centering
  \includegraphics[width=.9\linewidth]{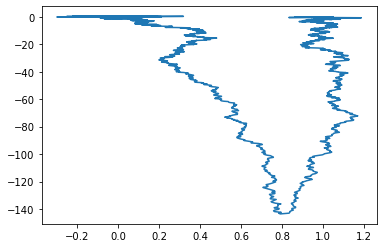}
  \caption{Trajectory of the noisy ISD.}
  \label{sfig:traj-singu-cut0}
\end{subfigure}
\caption{$\varepsilon=0.05$.}
\label{fig:singu-U-cut0}
\end{figure}

As seen in figure~\ref{fig:singu-U-cut0}, the Brownian motion allow to cross the set of singularities, but for small $\varepsilon$, this crossing may happen far from the set of minima and saddle point, because the deterministic process satisfies $\lim_{t\rightarrow\infty} y_t = +\infty$ for $x_0 >\sqrt{4/6}$ and $y_0>0$, where $(x_t,y_t)$ is the ISD. The singular lines are repulsive, and for $x>\sqrt{4/6}$, the process is attracted to the line $\left\{ x = 1 \right\}$. Concretely, for the process to cross the singular line, the noise must be great enough so that this occurs before the simulation stops due to numerical limits. 

Since the existence of this singularity results from a crossing between the eigenvalues of the Hessian of the potential $U_2$, we study the solution to solve the issue of singularities proposed in Section~\ref{s-sec:def-process}, to see in particular if it allows a faster convergences towards the saddle point in our example here. Recall the set of singularities:
\[
\mathcal S = \left\{(x,y)\in\R^2, \lambda_1(x,y)=\lambda_2(x,y) \right\},
\]
and the modified noisy ISD (writting $z=(x,y)$ and $Z_t$ the process): 
\[Z_t = -\left(I-2g(Z_t)v_1(Z_t)^tv_1(Z_t) -(2-g(Z_t)v_2(Z_t)^tv_2(Z_t))\right)\na U_2(Z_t)\dd t + \sqrt{2\varepsilon}\dd B_t,\]
with $g(z) = f(\lambda_2(z)-\lambda_1(z))$ for some function $f:\R_+\to\R_+$ such that $f(0)=1$, and $f(r)=2$ for $r\geqslant r_*$, and some small $r_*>0$. As explained, the idea is to replace the reflection with respect to $v_1(x)^{\perp}$ by a projection on $vect(v_1(x),v_2(x))^{\perp}$.


In dimension $2$, the process simply becomes a Brownian motion near $\mathcal S$. For numerical study we consider:
\[
f_1(r)=1 + \mathbbm{1}_{r>r_*},
\]
and
\[
f_2(r)=1 + \frac{r}{r_*}\mathbbm{1}_{r\leqslant r_*} + \mathbbm{1}_{r>r_*},
\]
where $r_*>0$ is a fixed parameter. An example of trajectory with $f_1$, is given in Figure~\ref{fig:singu-U-cut1}.

\begin{figure}
\begin{subfigure}{.5\textwidth}
  \centering
  \includegraphics[width=.9\linewidth]{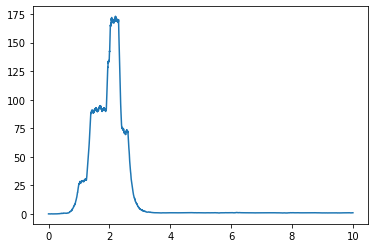}
  \caption{Time evolution of $U$ along the noisy $f_1$-modified ISD.}
  \label{sfig:ener-cut1}
\end{subfigure}
\begin{subfigure}{.5\textwidth}
  \centering
  \includegraphics[width=.9\linewidth]{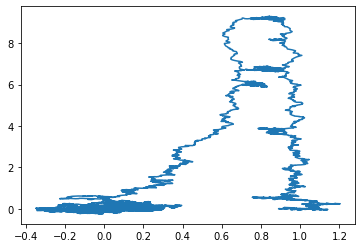}
  \caption{Trajectory of the noisy $f_1$-modified ISD.}
  \label{sfig:traj-cut1}
\end{subfigure}
\caption{$f_1$-modified ISD, $\varepsilon=0.05$.}
\label{fig:singu-U-cut1}
\end{figure}

We compare the speed at which the process reaches the saddle point with the different cut function $f_0=2$, $f_1$, and $f_2$. We chose as parameter $r_*=2$, $x_0=(0.9,0)$, and $n=500$ trials. The average of
\[
\tau = \inf\left\{t\geqslant 0, |X_t|<0.1 \right\},
\]
over the $n$ trials is displayed in Figure~\ref{fig:time-singu} for $\varepsilon\in\{0.03,0.04,0.05,0.06\}$.

\begin{figure}
  \centering
  \includegraphics[width=.7\linewidth]{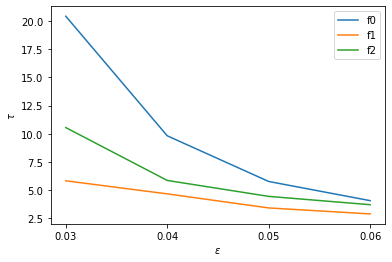}
  \caption{Time to reach the saddle point.}
  \label{fig:time-singu}
\end{figure}


We can see that the regularised dynamics  reach the saddle point faster than the initial dynamic with $f_0$. Moreover, as explained before, for smaller values of $\varepsilon$, there is a positive probability that the process doesn't reach the saddle point before hitting numerical limits. For a given temperature $\varepsilon$, this probability is lower for the processes defined from $f_1$ or $f_2$. This possible failure was not observed in the experiments displayed in Figure~\ref{fig:time-singu}, but for $\varepsilon = 0.02$, we estimated the probability of failure with 500 trials for each process. We get $0.394$ for $f_0$, and $0.046$ for $f_2$. Even at $\varepsilon=0.02$, no failure was observed for $f_1$.

Here we haven't used any of the modifications discussed at the beginning of Section~\ref{sec:Num} to enforce stability, in order to study the effects of the singular line on the time needed to find the saddle point.

\subsection{Lennard-Jones clusters}\label{s-sec:Switched-num}

We now study numerically the process for a potential in higher dimension. Consider the Lennard-Jones potential for $N=7$ particles in dimension $2$, given by 
\[
U_3(x_1,...x_N)=\sum_{i<j}W(|x_i-x_j|),
\]
where $x_i\in \R^2$ for all $1\leqslant i \leqslant N$, and for $r>0$:
\[
W(r) = 4\left( \frac{1}{r^{12}}-\frac{1}{r^6} \right).
\]
The potential $U$ is invariant by rotation and translation of the full system and by permutation of the particles. Once these symmetries are ruled out,
it has three non-global local minima, and an additional global one, represented in Figure~\ref{fig:LJ-minima}. Lennard-Jones clusters in dimension 2 or 3 are classical models in physics and have been extensively, see e.g. \cite{Northby87,DITTNER2017} and references within.

\begin{figure}
\begin{subfigure}{.5\textwidth}
  \centering
  \includegraphics[width=.9\linewidth]{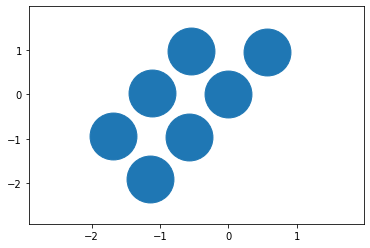}
  \caption{$U\approx -11,40$}
  \label{sfig:U=-11,40}
\end{subfigure}%
\begin{subfigure}{.5\textwidth}
  \centering
  \includegraphics[width=.9\linewidth]{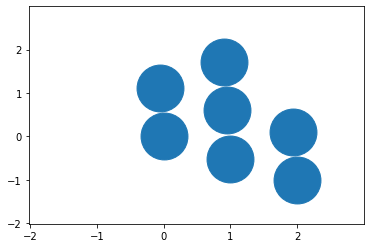}
  \caption{$U\approx -11,47$}
  \label{sfig:U=-11,47}
\end{subfigure}
\\
\begin{subfigure}{.5\textwidth}
  \centering
  \includegraphics[width=.9\linewidth]{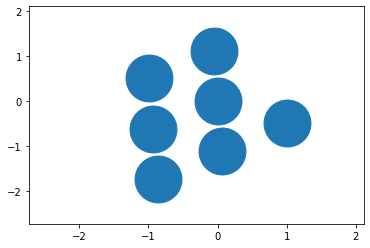}
  \caption{$U\approx -11,50$}
  \label{sfig:U=-11,50}
\end{subfigure}%
\begin{subfigure}{.5\textwidth}
  \centering
  \includegraphics[width=.9\linewidth]{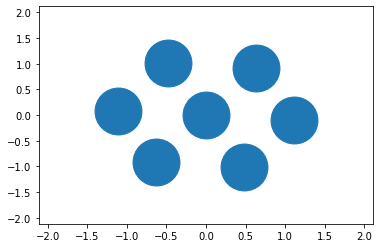}
  \caption{$U\approx -12,53$}
  \label{sfig:U=-12,53}
\end{subfigure}
\caption{List of minimizers of $U$}
\label{fig:LJ-minima}
\end{figure}

We use as initial condition the local minimum such that $U\approx -11,47$, and we compute the time necessary for the process to visit all minima for different values of $\nu$ and $\varepsilon$, estimated over 20 experiments. The average time is given in Figure~\ref{fig:time-LJ}. These results confirm the interpretation of  Figure~\ref{fig:temps_recherche_minima} discussed in Section~\ref{s-sec:GAD-num-1}. We see that the sensibility of the exploration time to the switching rate increases as the temperature $\varepsilon$ decreases, as should be expected: in the high frequency regime, the drift along the gentlest ascent direction averages to zero so that, in that direction, the process moves at the speed of a Brownian motion with variance $\varepsilon$; in the low frequency regime, the transitions between different wells are driven by the Brownian noise and thus follow an Eyring-Kramers formula, so that the mean transition time is exponentially large with $\varepsilon^{-1}$.

Since $U_3$ is  invariant by global translations and rotations of the system, $0$ is always an eigenvalue of  its Hessian, associated to eigenvectors which are orthogonal to $\na U_3$. As a consequence, at points $x$ where $\nabla^2 U_3(x)$ has no negative eigenvalue, the ISD dynamics is the same as the gradient descent dynamics. For a real application, these known symmetries should be taken into account in order to avoid this. However, as we saw, even without addressing this issue,  the ISP still finds all minima faster than the Langevin process: even though the gentle ascend behaviour plays no role in the vicinity of minimizers, it still makes saddle points locally attractive for the dynamics, which is a key point for the efficiency of the exploration.

\subsection*{Acknowledgement.} This works is supported by the French ANR grant SWIDIMS (ANR-20-CE40-0022).


\begin{figure}
  \centering
  \includegraphics[width=.7\linewidth]{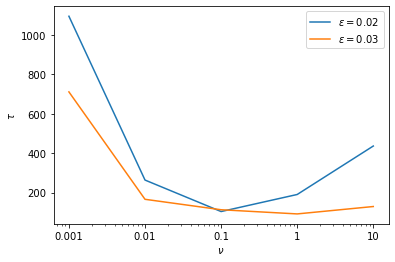}
  \caption{Time to visit all minima of the Lennard-Jones cluster.}
  \label{fig:time-LJ}
\end{figure}

\bibliography{biblioswitch}
\bibliographystyle{plain}

\end{document}